\setlist{listparindent=0pt,parsep=3pt}
\newtheorem{thm}{Theorem}[section]
\newtheorem{prop}[thm]{Proposition}
\newtheorem{cor}[thm]{Corollary}
\newtheorem{lem}[thm]{Lemma}
\newtheorem*{proposition*}{Proposition}
\newtheorem*{theorem*}{Theorem}
\theoremstyle{definition}
\theoremstyle{remark}
\newtheorem*{rem}{Remark}
\newtheorem*{rems}{Remarks}
\numberwithin{equation}{section}
\newcommand{\R}{\mathbb{R}}
\newcommand{\C}{\mathbb{C}}
\renewcommand{\P}{\mathbb{P}}
\DeclareMathOperator{\rank}{rank}
\newcommand{\Span}[1]{\operatorname{span}\{#1\}}
\DeclareSymbolFont{script}{U}{eus}{m}{n}
\DeclareSymbolFontAlphabet{\mathscr}{script}
\DeclareMathSymbol{\EuWedge}{0}{script}{"5E}
\renewcommand{\d}{\operatorname{d}}
\newcommand{\sub}{\subseteq}
\newcommand{\st}{\mathrel{|}}
\newcommand{\set}[1]{\{#1\}}
\newcommand{\half}{\tfrac12}
\newcommand{\triv}{\underline{\R}^{n+1,1}}
\newcommand{\ctriv}{\underline{\C}^{n+2}}
\newcommand{\cL}{\mathcal{L}}
\newcommand{\cD}{\mathcal{D}}
\newcommand{\cN}{\mathcal{N}}
\newcommand{\restr}[1]{{}_{|#1}}
\newcommand{\TitleWithDoi}[1]{\IfEmptyBibField{doi}{\textit{#1}}%
  {\href{https://doi.org/\BibField{doi}}{\textit{#1}}}}
\renewcommand{\eprint}[1]{\IfEmptyBibField{url}{\url{#1}}%
  {\href{\BibField{url}}{\url{#1}}}}
\title{On conformal Gauss maps}
\author{F.E. Burstall}
\email{feb@maths.bath.ac.uk}
\address{Dept.\ of Mathematical Sciences\\University of Bath\\
  Bath BA2 7AY\\ UK.}
\subjclass[2010]{53A30 (primary), 53C43 (secondary)}
\begin{document}
\begin{abstract}
  We characterise the maps into the space of $2$-spheres in
  $S^n$ that are the conformal Gauss maps of conformal
  immersions of a surface into $S^n$.  In particular, we give an
  invariant formulation and efficient proof of a
  characterisation, due to Dorfmeister--Wang
  \cites{DorWan13,DorWan}, of the harmonic maps that are conformal
  Gauss maps of Willmore surfaces.
\end{abstract}
\maketitle

\section*{Introduction}
\label{sec:introduction}
A useful tool in conformal surface geometry is the
\emph{central sphere congruence} \citelist{\cite{Bla29}*{\S67}\cite{Tho24}} or
\emph{conformal Gauss map} \cite{Bry84}.  Geometrically,
the central sphere congruence of a surface in the conformal
$n$-sphere attaches to each point of the surface a
$2$-sphere, tangent to the surface at that point and having the same mean
curvature vector as the surface at that
point.  The space of $2$-spheres in $S^n$ may be identified
with the Grassmannian of $(3,1)$-planes in $\R^{n+1,1}$ and
so the central sphere congruence may be viewed as a map, the
conformal Gauss map, to this Grassmannian.

The utility of this construction is that it links the
(parabolic) conformal geometry of the sphere to the
(reductive) pseudo-Riemannian geometry of the Grassmannian.
For example, a surface is Willmore if and only if the
conformal Gauss map is harmonic
\citelist{\cite{Bla29}*{\S81}\cite{Bry84}\cite{Eji88}\cite{Rig87}}.
In another direction, away from umbilic points, the metric
induced by the conformal Gauss map, which is in the
conformal class of the surface, is invariant by conformal
diffeomorphisms of $S^n$ and even arbitrary rescalings of
the ambient metric \cites{Fia44,Wan98}.

The purpose of this short note is to characterise those maps
into the Grassmannian which are the conformal Gauss map of a
conformal immersion.  In so doing, we build on a result of
Dorfmeister--Wang \cites{DorWan13,DorWan} which treats the
case where the map is harmonic.  As a by-product of our
analysis, we give an invariant formulation and efficient
proof of their result.

It is a pleasure to thank David Calderbank, Udo
Hertrich-Jeromin and Franz Pedit for their careful reading
of and helpful comments on a previous draft of this paper.

\section{The conformal Gauss map}
\label{sec:conformal-gauss-map}

We view the conformal $n$-sphere $S^n$ as the projective
lightcone $\P(\cL)$ of $\R^{n+1,1}$
\citelist{\cite{Dar72a}*{Livre~II,
    Chapitre~VI}\cite{Her03}*{Chapter~1}}.  Here
$\cL=\set{v\in\R^{n+1,1}_{\times}\st (v,v)=0}$ and $(\,,\,)$
is the signature $(n+1,1)$ inner product.

Let $f:\Sigma\to S^n=\P(\cL)$ be a conformal immersion of a
Riemann surface into the conformal $n$-sphere.
Equivalently, $f$ is a null line subbundle of the trivial
bundle $\triv:=\Sigma\times\R^{n+1,1}$.

Define $f^{1,0}\leq\ctriv$ by
\begin{equation*}
  f^{1,0}=\Span{\sigma,\d_Z\sigma\st\sigma\in\Gamma f,Z\in T^{1.0}\Sigma}.
\end{equation*}
Here the notation $U\leq V$ means $U$ is a subbundle of $V$.
That $f$ is a conformal immersion is equivalent to $f^{1,0}$
being a rank $2$ isotropic subbundle of $\underline{\C}^{n+2}$.
Set $f^{0,1}:=\overline{f^{1,0}}$ and note that\footnote{We
  make no notational distinction between a real bundle and
  its complexification} $f^{1,0}\cap f^{0,1}=f$.

The \emph{conformal Gauss map of $f$} is the bundle of
$(3,1)$-planes $V\leq\triv$ given by
\begin{equation*}
  V=\Span{\sigma,\d_Z\sigma,\d_{\bar{Z}}\sigma,\d_Z\d_{\bar{Z}}\sigma
    \st\sigma\in\Gamma f,Z\in \Gamma T^{1,0}\Sigma}.
\end{equation*}

We have a decomposition $\triv=V\oplus V^{\perp}$ which
induces a decomposition of the flat connection $\d$:
\begin{equation*}
  \d=\cD+\cN,
\end{equation*}
where $\cD$ is a metric connection preserving $V$ and
$V^{\perp}$ while $\cN$ is a $1$-form taking values in
skew-endomorphisms of $\triv$ which \emph{permute} $V$ and
$V^{\perp}$.
\begin{rem}
  We may view $V$ as a map from $\Sigma$ into the
  Grassmannian of $(3,1)$-planes in $\R^{n+1,1}$ and then
  $\cN$ can be identified with its differential.
\end{rem}
The flatness of $\d$ yields the structure equations of the
situation:
\begin{subequations}
  \label{eq:1}
  \begin{align}
    \label{eq:2}
    0&=R^{\cD}+\half[\cN\wedge\cN]\\
    0&=\d^{\cD}\cN.\label{eq:3}
  \end{align}
\end{subequations}
Here $\d^{\cD}$ is the exterior derivative on bundle-valued
forms with $\cD$ used to differentiate coefficients.

The conformal Gauss map $V$ is defined by the following
properties:
\begin{compactenum}
\item $f^{0,1}\leq V$;
\item $f^{0,1}\leq\ker\cN^{1,0}$.
\end{compactenum}
Now, for $Z$ a local section of $T^{1,0}\Sigma$,
$\cN_Z$ is skew while $f^{0,1}$ is maximal isotropic
in $V$ so that
\begin{equation}\label{eq:4}
  \cN_{Z}V^{\perp}=(\ker\cN_{Z}\restr{V})^{\perp}
  \sub (f^{0,1})^{\perp}\cap V=f^{0,1}\leq\ker\cN_{Z}\restr{V} 
\end{equation}
and we conclude:
\begin{lem}[c.f.\ \cite{DorWan13}*{Proposition~2.2}]\label{th:1}
  If $V$ is the conformal Gauss map of a conformal immersion
  then $(\cN^{1,0})^2\restr{V^{\perp}}=0$.
\end{lem}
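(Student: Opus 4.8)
The plan is to read off the conclusion from the displayed chain~\eqref{eq:4}, which already does all the work; the only thing left is to observe that the chain exhibits $\cN_Z V^{\perp}$ inside $\ker\cN_Z$. So, for a local section $Z$ of $T^{1,0}\Sigma$, I would argue that $\cN_Z\circ\cN_Z$ vanishes on $V^{\perp}$ — this is the meaning of $(\cN^{1,0})^2\restr{V^{\perp}}=0$, the square being a well-defined section of $(T^{\ast 1,0}\Sigma)^{\tens 2}\tens\operatorname{Hom}(V^{\perp},\triv)$ since $T^{\ast 1,0}\Sigma$ is a line bundle — and for this it suffices to prove $\cN_Z V^{\perp}\sub f^{0,1}$, because the second defining property of $V$ gives $f^{0,1}\leq\ker\cN^{1,0}\leq\ker\cN_Z$.

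The step proving $\cN_Z V^{\perp}\sub f^{0,1}$ is the heart of the matter, and it is exactly \eqref{eq:4} unpacked in three moves. First, since $\cN$ permutes the nondegenerate bundles $V$ and $V^{\perp}$, the endomorphism $\cN_Z$ is block off-diagonal for the splitting $\triv=V\oplus V^{\perp}$; skew-symmetry of $\cN_Z$ on $\triv$ then says the block $V^{\perp}\to V$ is (minus) the adjoint of the block $\cN_Z\restr{V}\colon V\to V^{\perp}$ with respect to the induced nondegenerate forms, so that $\cN_Z V^{\perp}=(\ker\cN_Z\restr{V})^{\perp}$, the orthogonal complement being taken inside $V$. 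Second, the two defining properties $f^{0,1}\leq V$ and $f^{0,1}\leq\ker\cN^{1,0}$ combine to give $f^{0,1}\sub\ker\cN_Z\restr{V}$, whence $(\ker\cN_Z\restr{V})^{\perp}\sub(f^{0,1})^{\perp}\cap V$. Third, $f^{0,1}$ is a rank $2$ isotropic subbundle of the rank $4$ nondegenerate $V$, hence maximal isotropic there, so $(f^{0,1})^{\perp}\cap V=f^{0,1}$. Chaining these, $\cN_Z V^{\perp}\sub f^{0,1}\leq\ker\cN_Z$, and therefore $\cN_Z^2 V^{\perp}=\cN_Z(f^{0,1})=0$.

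I do not expect a genuine obstacle here — the computation is essentially forced — but the point requiring care is bookkeeping of orthogonal complements: the identity $\cN_Z V^{\perp}=(\ker\cN_Z\restr{V})^{\perp}$ is an assertion about the off-diagonal blocks of $\cN_Z$ and uses both that $\cN$ is skew and that it interchanges $V$ and $V^{\perp}$; the final self-perpendicularity $(f^{0,1})^{\perp}\cap V=f^{0,1}$ uses the signature $(3,1)$ of $V$ crucially. It is also worth flagging that the statement is genuinely one-sided in type: $\cN^{0,1}$ annihilates $f^{1,0}$ rather than $f^{0,1}$, so $(\cN^{0,1})^2\restr{V^{\perp}}=0$ follows by conjugation, but the mixed product $\cN^{1,0}\cN^{0,1}$ need not vanish on $V^{\perp}$.
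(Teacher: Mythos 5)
Your proof is correct and is precisely the paper's own argument: the lemma is deduced directly from the chain \eqref{eq:4}, namely $\cN_Z V^{\perp}=(\ker\cN_Z\restr{V})^{\perp}\sub(f^{0,1})^{\perp}\cap V=f^{0,1}\leq\ker\cN_Z\restr{V}$, using skewness of $\cN_Z$, the two defining properties of $V$, and maximal isotropy of $f^{0,1}$ in $V$. Your unpacking of the orthogonal-complement identity and the well-definedness of $(\cN^{1,0})^2$ is accurate, so there is nothing to add.
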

Following \cite{DorWan13}, we say that $V$ with the property
of \cref{th:1} is \emph{strongly conformal}.

The conformal Gauss map also satisfies a second order
condition.  First note that \eqref{eq:4} tells us that
\begin{equation}
  \label{eq:5}
  \cN_{Z}V^{\perp}\sub f^{0,1}.
\end{equation}
Moreover, $f^{0,1}$ is $\cD^{0,1}$-stable thanks to the
following lemma which will see further use in
\cref{sec:converse-harmonic-v}:
\begin{lem}
  \label{th:2}
  Let $W\leq V$ be maximal isotropic in $V$ with a
  never-vanishing section $w$ such that $\cD_{\bar{Z}}w\in
  W$, for $\bar{Z}\in T^{0,1}\Sigma$.  Then $W$ is
  $\cD^{0,1}$-stable.
\end{lem}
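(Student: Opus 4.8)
The plan is to exploit that $\cD$ is a metric connection preserving $V$, together with the isotropy of $W$ and the hypothesis on the section $w$. Complexified, $V$ carries a nondegenerate inner product of rank $4$, so a maximal isotropic subbundle $W\leq V$ has rank $2$ and coincides with its own orthogonal complement taken inside $V$; that is, $W=W^{\perp}\cap V$. Since $\cD$ preserves $V$ we have $\cD_{\bar{Z}}s\in\Gamma V$ for every section $s$ of $W$ and every $\bar{Z}\in T^{0,1}\Sigma$, so to prove $\cD^{0,1}$-stability it is enough to show that $\cD_{\bar{Z}}s$ is orthogonal to $W$.

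First I would reduce the verification to a local frame of $W$: as $w$ is never-vanishing, extend it locally to a frame $\set{w,s}$ of $W$. For the frame element $w$ there is nothing to do, since $\cD_{\bar{Z}}w\in\Gamma W$ holds by hypothesis. So it remains to treat $s$, which I would do by pairing $\cD_{\bar{Z}}s$ against the two frame elements and invoking metricity of $\cD$.

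Indeed, $(\cD_{\bar{Z}}s,s)=\half\,\bar{Z}(s,s)=0$ because $W$ is isotropic, while $(\cD_{\bar{Z}}s,w)=\bar{Z}(s,w)-(s,\cD_{\bar{Z}}w)=0$, the first term vanishing since $(s,w)=0$ and the second since $\cD_{\bar{Z}}w$ and $s$ both lie in the isotropic bundle $W$. Hence $\cD_{\bar{Z}}s$ is orthogonal to $\Span{w,s}=W$, so $\cD_{\bar{Z}}s\in\Gamma(W^{\perp}\cap V)=\Gamma W$, and therefore $W$ is $\cD^{0,1}$-stable.

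I do not anticipate a real obstacle: the content is a short fibrewise linear algebra computation. The two points needing a little care are the identity $W=W^{\perp}\cap V$ for maximal isotropic $W$---a dimension count $\rank W=2=4-2=\rank(W^{\perp}\cap V)$, valid because the complexified form is nondegenerate on $V$---and the remark that $\cD^{0,1}$-stability may be tested on a local frame, which is precisely where the never-vanishing hypothesis on $w$ enters, allowing us to complete $w$ to such a frame.
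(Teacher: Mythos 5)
Your proof is correct and is essentially identical to the paper's: both complete $w$ to a local frame $\set{w,s}$ of $W$, use metricity of $\cD$ and isotropy of $W$ to show $\cD_{\bar{Z}}s$ is orthogonal to both frame elements, and conclude via $W=W^{\perp}\cap V$ from maximal isotropy. No differences worth noting.
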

\begin{proof}
  Let $u\in\Gamma W$ be a local section so that $u,w$
  locally frame $W$.  It suffices to show that
  $\cD_{\bar{Z}}u\in W$.  However,
  \begin{align*}
    (\cD_{\bar{Z}}u,w)&=-(u,\cD_{\bar{Z}}w)=0;\\
    (\cD_{\bar{Z}}u,u)&=\half \bar{Z}(u,u)=0,
  \end{align*}
  since $W$ is isotropic.  Thus $\cD_{\bar{Z}}u\in
  W^{\perp}\cap V=W$ since $W$ is maximal isotropic in $V$.
\end{proof}
In the case at hand, for $\sigma\in\Gamma f$, we have
$\cD_{\bar{Z}}\sigma\in f^{0,1}$ by definition so
\cref{th:2} applies to show that $f^{0,1}$ is
$\cD^{0,1}$-stable.

Now contemplate the \emph{tension field}
$\tau_{V}:=*\d^{\cD}*\cN$ of $V$.  Since
$*\cN=i(\cN^{1,0}-\cN^{0,1})$, \eqref{eq:3} yields
\begin{equation*}
  \tau_V=2i*\d^{\cD}\cN^{1,0}=-2i*\d^{\cD}\cN^{0,1}.
\end{equation*}
In view of the last paragraph, $\tau_VV^{\perp}$ takes
values in $f^{0,1}$ since $*\d^{\cD}\cN^{1,0}V^{\perp}$
does.  However, $\tau_V$ is real so that $\tau_VV^{\perp}$
takes values in $f^{0,1}\cap f^{1,0}=f$:
\begin{equation}
  \label{eq:6}
  \tau_VV^{\perp}\sub f.
\end{equation}
In particular, since $f$ is a null line subbundle on which
$\cN$ vanishes, we conclude:
\begin{prop}
  \label{th:3}
  If $V$ is the conformal Gauss map of a conformal immersion
  with tension field $\tau_{V}$.  Then:
  \begin{subequations}\label{eq:7}
    \begin{align}
      \cN\circ\tau_V\restr{V^{\perp}}&=0\label{eq:8}\\
      (\tau_V)^2\restr{V^{\perp}}&=0\label{eq:9}.
    \end{align}
  \end{subequations}
\end{prop}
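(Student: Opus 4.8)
The plan is to derive both parts of \cref{th:3} from \eqref{eq:6}, which already gives $\tau_V V^\perp\sub f$, together with two elementary observations: that $\cN$ annihilates $f$, and that $\tau_V$ is a skew endomorphism of $\triv$ that permutes $V$ and $V^\perp$.

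First I would record that $\cN\restr{f}=0$. Indeed, by construction $f\leq f^{0,1}\leq\ker\cN^{1,0}$, and conjugating (recall $\cN$ is real) $f\leq f^{1,0}\leq\ker\cN^{0,1}$, so $\cN f=\cN^{1,0}f+\cN^{0,1}f=0$. Combined with $\tau_V V^\perp\sub f$ this yields \eqref{eq:8} immediately: $\cN\circ\tau_V$ maps $V^\perp$ into $\cN f=0$.

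For \eqref{eq:9} the key step is to show that in fact $\tau_V f=0$; granting this, $(\tau_V)^2 V^\perp=\tau_V(\tau_V V^\perp)\sub\tau_V f=0$ by \eqref{eq:6}. To prove $\tau_V f=0$ I would show that $\tau_V f$ lies in both $V^\perp$ and $V$, which intersect trivially since the $(3,1)$-plane $V$ is non-degenerate. On one hand, $\tau_V=*\d^{\cD}*\cN$ is obtained from $\cN$ — which permutes $V$ and $V^\perp$ — by operations (the Hodge star on a Riemann surface and the $\cD$-covariant exterior derivative) that preserve this property, because $\cD$ respects the orthogonal splitting $\triv=V\oplus V^\perp$; hence $\tau_V$ permutes $V$ and $V^\perp$, and in particular $\tau_V f\sub\tau_V V\sub V^\perp$. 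On the other hand $\tau_V$ is skew, since $\cN$ is skew and $\cD$ is a metric connection; so for $\sigma\in\Gamma f$ and $v\in\Gamma V^\perp$ one has $(\tau_V\sigma,v)=-(\sigma,\tau_V v)$, and this vanishes because $\tau_V v\in f$ by \eqref{eq:6} while $f$ is isotropic. Thus $\tau_V f\perp V^\perp$, i.e.\ $\tau_V f\sub V$, and therefore $\tau_V f\sub V\cap V^\perp=0$.

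The argument is essentially formal once \eqref{eq:6} is in hand; the only point needing a moment's care is the claim that $\tau_V$ inherits from $\cN$ both skew-symmetry and the permuting property, which is where the explicit formula $\tau_V=*\d^{\cD}*\cN$ and the stated properties of the decomposition $\d=\cD+\cN$ get used.
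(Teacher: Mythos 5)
Your proof is correct and follows essentially the same route as the paper, which deduces \cref{th:3} directly from \eqref{eq:6} together with the facts that $\cN$ vanishes on $f$ and that $f$ is null. You simply make explicit the details the paper leaves implicit, namely that $\tau_V$ is skew and permutes $V$ and $V^{\perp}$, whence $\tau_Vf=0$ and \eqref{eq:9} follows.
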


This line of argument additionally give us some control on
the rank of $\cN:T\Sigma\otimes V^{\perp}\to V$:
\begin{lem}
  \label{th:4}
  Let $V$ be the conformal Gauss map of a conformal
  immersion $f$, then the set
  \begin{equation*}
    A:=\set{p\in\Sigma\st
      \cN_ZV^{\perp}=\cN_{\bar{Z}}V^{\perp}\neq\set0, Z\in
      T_p\Sigma}
  \end{equation*}
  is nowhere dense.
\end{lem}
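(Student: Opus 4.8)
The plan is to express $A$ in terms of the conformal Hopf differential of $f$ and then to deduce that $A$ lies in the topological boundary of that differential's zero set, which is automatically nowhere dense.

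Work locally. Fix a nonempty open $U\sub\Sigma$ inside a holomorphic coordinate chart, with coordinate $z$, put $Z=\partial/\partial z$, and pick a local section $\sigma$ of $f$ on $U$. The four sections $\sigma,\d_Z\sigma,\d_{\bar Z}\sigma,\d_Z\d_{\bar Z}\sigma$ span, hence frame, the rank-$4$ bundle $V$ over $U$, and satisfy $(\sigma,\sigma)=(\sigma,\d_Z\sigma)=(\sigma,\d_{\bar Z}\sigma)=(\d_Z\sigma,\d_Z\sigma)=0$, $(\d_Z\sigma,\d_{\bar Z}\sigma)=:\rho\neq0$ and $(\sigma,\d_Z\d_{\bar Z}\sigma)=-\rho$ (from isotropy of $f^{1,0}$, from $(\sigma,\sigma)\equiv0$, and from differentiation). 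Since $\cN_Z v=(\d_Z v)_{V^\perp}$ for $v\in\Gamma V$, we have $\cN_Z\sigma=0=\cN_Z\d_{\bar Z}\sigma$ (as $\d_Z\sigma,\d_Z\d_{\bar Z}\sigma\in V$) and
\begin{equation*}
  \kappa:=\cN_Z\d_Z\sigma=(\d_Z\d_Z\sigma)_{V^\perp}\in\Gamma V^\perp ,
\end{equation*}
the conformal Hopf differential. I claim \emph{(i)} $\kappa_p\neq0\Rightarrow p\notin A$, and \emph{(ii)} if $\kappa$ vanishes on $U$ then so does $\cN$.

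For (i): if $p\in A$ then, by \eqref{eq:5} and its complex conjugate, $\cN_ZV^\perp_p=\cN_{\bar Z}V^\perp_p$ is a nonzero subspace of $f^{1,0}\cap f^{0,1}=f$, hence equals $f_p$; but if $\kappa_p\neq0$ then, choosing $\phi\in V^\perp_p$ with $(\kappa_p,\phi)\neq0$, we get $\cN_Z\phi\in V_p$ with $(\cN_Z\phi,\d_Z\sigma)=-(\phi,\cN_Z\d_Z\sigma)=-(\phi,\kappa_p)\neq0$, whereas every vector of $f_p=\Span\sigma$ is orthogonal to $\d_Z\sigma$; so $\cN_ZV^\perp_p\neq f_p$, a contradiction. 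For (ii): pairing $\d_Z\d_Z\sigma$ with $\sigma$ and using $(\sigma,\d_Z\d_{\bar Z}\sigma)=-\rho\neq0$ shows the $\d_Z\d_{\bar Z}\sigma$-component of $\d_Z\d_Z\sigma$ in the frame vanishes, so where $\kappa\equiv0$ we may write $\d_Z\d_Z\sigma=a_1\sigma+a_2\d_Z\sigma+a_3\d_{\bar Z}\sigma$ for functions $a_i$. Differentiating along $\bar Z$ and commuting the (flat) derivatives, every resulting term except $a_3\d_{\bar Z}\d_{\bar Z}\sigma$ lies in $V$, so $\cN_Z\d_Z\d_{\bar Z}\sigma=(\d_Z\d_Z\d_{\bar Z}\sigma)_{V^\perp}=a_3\overline{\kappa}=0$ on $U$. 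Hence $\cN_Z$ annihilates the whole frame of $V$, i.e.\ $\cN_Z\restr{V}=0$; skew-symmetry of $\cN_Z$ then forces $\cN_ZV^\perp\perp V$, so $\cN_Z\restr{V^\perp}=0$ as well, giving $\cN_Z=0$, and conjugating, $\cN=0$ on $U$.

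It remains to assemble the pieces. For any nonempty open $U$ as above: either some $p\in U$ has $\kappa_p\neq0$, and then a neighbourhood of $p$ is disjoint from $A$ by (i); or $\kappa$ vanishes on $U$, and then so does $\cN$ by (ii), so $\cN_ZV^\perp$ vanishes on $U$ and $U\cap A=\emptyset$. Either way $U$ contains a nonempty open subset disjoint from $A$; equivalently $\Sigma\setminus\overline A$ is dense, so $A$ is nowhere dense. The step demanding the most care is (ii) --- that the vanishing on an open set of the single $V^\perp$-valued quantity $\kappa$ already forces all of $\cN$ to vanish there --- which rests precisely on the structural facts that $\d_Z\d_{\bar Z}\sigma\in V$ and that $f^{1,0}$ is isotropic. (Geometrically, $\set{\kappa=0}$ is the conformal umbilic locus, $V$ is locally constant on its interior, and the degeneracy defining $A$ can occur only along its boundary.)
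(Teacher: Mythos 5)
Your proof is correct, but it takes a genuinely different route from the paper's. The paper argues by contradiction with the immersion hypothesis: on any open subset of $\overline{A}$ one has $\cN_ZV^{\perp}=f$, and then \eqref{eq:6} together with \cref{th:6} shows $\cD_{\bar Z}w\in f$ for a frame $w$ of $f$, so that $f$ is $\cD$-stable and $\cN$-annihilated, hence constant --- impossible. You instead introduce the conformal Hopf differential $\kappa=(\d_Z\d_Z\sigma)_{V^\perp}$ and prove two facts: pointwise, $\kappa_p\neq0$ excludes $p$ from $A$ (since skew-symmetry of $\cN_Z$ produces an element of $\cN_ZV^{\perp}_p$ pairing nontrivially with $\d_Z\sigma$, hence outside $f_p$, while membership in $A$ forces $\cN_ZV^{\perp}_p=f_p$ via \eqref{eq:5} and its conjugate); and on any open set where $\kappa\equiv0$, a frame computation shows $\cN\equiv0$, which excludes $A$ by the non-vanishing requirement in its definition. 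Thus $A$ sits in the topological boundary of $\set{\kappa=0}$. Both arguments are sound; the paper's is shorter because it recycles machinery (the tension field identity \eqref{eq:6} and \cref{th:6}) already needed for \cref{th:7}, whereas yours is more elementary and self-contained --- it never invokes $\tau_V$ --- and yields the sharper geometric statement that the degeneracy can occur only on the boundary of the umbilic locus, with $V$ constant on that locus's interior. The one step that deserves the care you give it is indeed (ii): the commutation of flat derivatives and the vanishing of the $\d_Z\d_{\bar Z}\sigma$-coefficient (via $(\d_Z\d_Z\sigma,\sigma)=0$ and $(\sigma,\d_Z\d_{\bar Z}\sigma)=-\rho\neq0$) are exactly what is needed to get $\cN_Z\d_Z\d_{\bar Z}\sigma=0$ and hence $\cN_Z\restr{V}=0$, after which skew-symmetry and nondegeneracy of the metric on $V$ finish the job.
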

\begin{proof}
  Any open set in the closure of $A$ must contain an open
  set where
  $\cN_ZV^{\perp}=\cN_{\bar{Z}}V^{\perp}\neq\set0$.  On this
  latter set, we immediately see from \eqref{eq:5} that
  $\cN_ZV^{\perp}=f$.  Since $\tau_VV^{\perp}\leq f$ also,
  by \eqref{eq:6}, we rapidly conclude (c.f.\ \cref{th:6}
  below) that $f$ is $\cD^{0,1}$-stable and so $\cD$-stable.
  Since $\cN f=0$ also, $f$ is constant: a contradiction.
\end{proof}

In the next section, we will establish a generic converse to
these results.

\section{Reconstruction of $f$ from $V$}
\label{sec:converse-harmonic-v}

Suppose now that we have a bundle $V\leq\triv$ of
$(3,1)$-planes and ask whether $V$ is the conformal Gauss
map of some conformal immersion $f$.  Our task is then
to construct $f\leq V$ but, in fact, it will be more
convenient to construct $f^{0,1}$:
\begin{prop}
  \label{th:5}
  Let $W\leq V$ be a maximal isotropic subbundle of $V$ such
  that:
  \begin{compactenum}
  \item $W$ is $\cD^{0,1}$-stable;
  \item $\cN^{1,0}W=0$, or, equivalently (c.f. \eqref{eq:4}),
    $\cN_{Z}V^{\perp}\sub W$, for all $Z\in T^{1,0}\Sigma$.
  \end{compactenum}
  Then $f:=W\cap \overline{W}$ is a real, null, line subbundle
  which, on the open set where it immerses, is conformal
  with $W=f^{0,1}$ and conformal Gauss map $V$.
\end{prop}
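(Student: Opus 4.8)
The plan is to reduce everything to computations with a single real local section $\sigma$ of $f$, using the two hypotheses to force the derived subbundles $f^{1,0},f^{0,1}$ into $\overline W$ and $W$. The first step is the algebraic claim that $f=W\cap\overline W$ is a real null line. It is conjugation-stable, hence the complexification of a real subbundle, and it is isotropic since $W$ is; as a real isotropic subspace of the signature-$(3,1)$ bundle $V$ its rank is at most $1$. For the opposite inequality I would use the Hermitian form $h(u,v):=(u,\overline v)$ on $V$, of signature $(3,1)$: since $W$ is maximal isotropic for $(\,,\,)$ one has $h(W,\overline W)=(W,W)=0$, so $W$ and $\overline W$ are $h$-orthogonal; thus $W\cap\overline W=0$ would split $V$ (complexified) as the $h$-orthogonal sum $W\oplus\overline W$ and force its signature $(3,1)$ to equal twice the signature of $h|_W$, which is absurd. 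Hence $\rank f=1$ everywhere, and $f$ is a genuine real null line subbundle.

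Next I fix a real local section $\sigma\in\Gamma f$ and a local section $Z$ of $T^{1,0}\Sigma$; by the Leibniz rule the spans below are insensitive to these choices. Since $\sigma\in\Gamma W$, the second hypothesis gives $\cN_Z\sigma=0$; since $\sigma$ is real, $\sigma\in\Gamma\overline W$ as well, and, by conjugation, $\cN^{0,1}\overline W=0$, so $\cN_{\bar Z}\sigma=0$. Hence $\d_Z\sigma=\cD_Z\sigma$ and $\d_{\bar Z}\sigma=\cD_{\bar Z}\sigma$. By the $\cD^{0,1}$-stability of $W$ we have $\cD_{\bar Z}\sigma\in\Gamma W$, and conjugating (note $\cD=\d-\cN$ is a real connection) $\cD_Z\sigma=\overline{\cD_{\bar Z}\sigma}\in\Gamma\overline W$. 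Therefore $f^{1,0}=\Span{\sigma,\d_Z\sigma}\subseteq\overline W$ and $f^{0,1}=\overline{f^{1,0}}\subseteq W$. In particular $f^{1,0}$ is automatically isotropic, so $f$ is automatically conformal wherever it immerses, and the set where $f$ immerses is precisely the open set where $f^{1,0}$ has rank $2$; on that set $f^{1,0}=\overline W$ and $f^{0,1}=W$, which is the second assertion.

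For the conformal Gauss map $V'$ of $f$ I would work with $V'=\Span{\sigma,\d_Z\sigma,\d_{\bar Z}\sigma,\d_Z\d_{\bar Z}\sigma}$. The first three sections lie in $W+\overline W\subseteq V$, and $\d_Z\d_{\bar Z}\sigma=\d_Z(\cD_{\bar Z}\sigma)=\cD_Z(\cD_{\bar Z}\sigma)\in\Gamma V$, since $\cD_{\bar Z}\sigma\in\Gamma W$ is again annihilated by $\cN^{1,0}$ and $\cD$ preserves $V$; thus $V'\subseteq V$. It remains to show $V'=V$, i.e.\ that $V'$ has rank $4$ on the immersion set. There $W+\overline W=f^{1,0}+f^{0,1}$ has rank $3$ (as $W\cap\overline W=f$ has rank $1$), and a rank-$3$ subbundle of the nondegenerate $4$-plane $V$ cannot be totally isotropic; since $\sigma,\d_Z\sigma,\d_{\bar Z}\sigma$ frame $W+\overline W$ and all their pairwise inner products vanish except $(\d_Z\sigma,\d_{\bar Z}\sigma)$, this forces $(\d_Z\sigma,\d_{\bar Z}\sigma)\neq0$. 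Differentiating $(\sigma,\sigma)=0$ twice gives $(\d_Z\d_{\bar Z}\sigma,\sigma)=-(\d_Z\sigma,\d_{\bar Z}\sigma)\neq0$, whereas $(\sigma,W+\overline W)=0$; hence $\d_Z\d_{\bar Z}\sigma\notin W+\overline W$, so $V'$ has rank $4$ and $V'=V$.

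I expect the crux to be this last step — upgrading $V'\subseteq V$ to $V'=V$ — because a priori $V'$ could collapse onto the rank-$3$ bundle $W+\overline W$; the point that rescues it is that $W+\overline W$ cannot be totally isotropic, which simultaneously produces $(\d_Z\sigma,\d_{\bar Z}\sigma)\neq0$ and drives $\d_Z\d_{\bar Z}\sigma$ out of $W+\overline W$. The rank count for $f$ itself is a miniature version of the same signature bookkeeping.
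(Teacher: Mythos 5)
Your proof is correct and follows essentially the same route as the paper: identify $f=W\cap\overline{W}$ as a real null line via the signature of $V$, use $\cN^{1,0}W=0$ (and its conjugate) together with $\cD^{0,1}$-stability to show $\d_{\bar Z}\sigma=\cD_{\bar Z}\sigma\in W$, hence $W=f^{0,1}$ on the immersion set, and read off conformality from isotropy and the Gauss-map property from $f^{0,1}\leq\ker\cN^{1,0}$. The only difference is that you verify directly that the rank-$4$ span defining the conformal Gauss map equals $V$ (via the observation that the rank-$3$ bundle $W+\overline{W}$ cannot be totally isotropic, forcing $(\d_Z\sigma,\d_{\bar Z}\sigma)\neq 0$ and $\d_Z\d_{\bar Z}\sigma\notin W+\overline{W}$), whereas the paper delegates this to its earlier characterisation of the conformal Gauss map by the two properties $f^{0,1}\leq V$ and $f^{0,1}\leq\ker\cN^{1,0}$; your version simply makes that step explicit.
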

\begin{proof}
  Since $V$ has signature $(3,1)$, $W$ and $\overline{W}$
  must intersect in a line bundle, necessarily null and
  real.  Since $f$  is real,
  $f\leq\ker\cN^{1,0}\cap\ker\cN^{0,1}$ so that, for
  $\sigma\in\Gamma f$, $\bar{Z}\in T^{0,1}\Sigma$,
  \begin{equation*}
    \d_{\bar{Z}}\sigma=\cD_{\bar{Z}}\sigma+\cN_{\bar{Z}}\sigma=
    \cD_{\bar{Z}}\sigma\in W,
  \end{equation*}
  since $f\leq W$ and $W$ is $\cD^{0,1}$-stable.  Thus
  $W=f^{0,1}$ on the set where $f$ immerses. We conclude
  that, on that set, $f$ is conformal, since $f^{0,1}$ is
  isotropic and $V$ is the conformal Gauss map of $f$ since
  $f^{0,1}\leq\ker\cN^{1,0}$.
\end{proof}

For our main result, we need the following simple
observation:
\begin{lem}
  \label{th:6}
  Let $V\leq\triv$ be a bundle of $(3,1)$-planes with
  tension field $\tau_V$.
  Let $w=\cN_{Z}\nu$, for $\nu\in\Gamma V^{\perp}$ and $Z\in
  T\Sigma$.  Then
  $\cD_{\bar{Z}}w\in \cN_ZV^{\perp}+\tau_VV^{\perp}$. 
\end{lem}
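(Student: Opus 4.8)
The plan is to differentiate $w=\cN_Z\nu$ by the Leibniz rule and to recognise the two resulting terms as sections of $\cN_ZV^\perp$ and of $\tau_VV^\perp$ respectively. Since the assertion is pointwise and is insensitive to rescaling $Z$ by a nowhere-vanishing function (such a rescaling replaces $\cD_{\bar Z}w$ by a function multiple of itself plus a section of $\cN_ZV^\perp$, while leaving the subbundles $\cN_ZV^\perp$ and $\tau_VV^\perp$ unchanged), I would first reduce to the case $Z=\partial_z$ for a local holomorphic coordinate $z$, so that $\bar Z=\partial_{\bar z}$ and $[Z,\bar Z]=0$.

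The one preliminary that needs doing is to rewrite $\tau_V$, as an endomorphism of $\triv$, in terms of a single covariant derivative. In the frame above $\cN^{1,0}=\cN_Z\,dz$, so $\d^\cD\cN^{1,0}=(\cD_{\bar Z}\cN_Z)\,d\bar z\wedge dz$; since $*(d\bar z\wedge dz)=2i$, the identity $\tau_V=2i*\d^\cD\cN^{1,0}$ gives $\tau_V=-4\,\cD_{\bar Z}\cN_Z$ (the precise constant is immaterial for what follows). Equivalently one may read off $\tau_V=-2(\cD_{\bar Z}\cN_Z+\cD_Z\cN_{\bar Z})$ straight from $\tau_V=*\d^\cD*\cN$ and then use the structure equation \eqref{eq:3}, in the form $\cD_{\bar Z}\cN_Z=\cD_Z\cN_{\bar Z}$, to collapse the two terms. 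I would also note that although $\tau_V$ itself depends on the chosen metric in the conformal class, the subbundle $\tau_VV^\perp$ does not, so nothing is lost here.

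After that the proof is immediate. Because $\cD$ is a metric connection preserving $V$ and $V^\perp$, we have $\cD_{\bar Z}\nu\in\Gamma V^\perp$, so the Leibniz rule (for $\cD$ on $\mathrm{End}(\triv)$ and on $\triv$) gives
\[
  \cD_{\bar Z}w=(\cD_{\bar Z}\cN_Z)\nu+\cN_Z(\cD_{\bar Z}\nu)=-\tfrac14\,\tau_V\nu+\cN_Z(\cD_{\bar Z}\nu)\in\tau_VV^\perp+\cN_ZV^\perp,
\]
which is the claim. I do not foresee any real obstacle: the entire content is the identification of $\tau_V$ with a multiple of $\cD_{\bar Z}\cN_Z$ together with the observation that $\cD_{\bar Z}\nu$ stays inside $V^\perp$; the only genuinely fiddly points are pinning down the Hodge-star normalisation and the (routine) reduction to a coordinate vector field forced by the statement being phrased for an arbitrary $Z\in T\Sigma$.
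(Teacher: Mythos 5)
Your proof is correct and is essentially the paper's argument: apply the Leibniz rule to $\cD_{\bar Z}(\cN_Z\nu)$, note $\cD_{\bar Z}\nu\in\Gamma V^{\perp}$, and identify $(\cD_{\bar Z}\cN_Z)\nu$ with a multiple of $\tau_V\nu$ modulo $\cN_ZV^{\perp}$. The only (cosmetic) difference is that you normalise to a coordinate field so that $[Z,\bar Z]=0$ and pin down the constant, whereas the paper keeps a general "suitable" $Z$ and absorbs the resulting $\cN^{1,0}_{[\bar Z,Z]}\nu$ term into $\cN_ZV^{\perp}$.
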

\begin{proof}
  For suitable $Z\in T^{1,0}\Sigma$,
  $\tau_V=\cD_{\bar{Z}}\cN_Z-\cN^{1,0}_{[\bar{Z},Z]}$ so
  that
  \begin{multline*}
    \cD_{\bar{Z}}w=\cD_{\bar{Z}}(\cN_Z\nu)=
    (\cD_{\bar{Z}}\cN_Z)\nu+\cN_Z(\cD_{\bar{Z}}\nu)\\
    =\tau_{V}\nu+\cN^{1,0}_{[\bar{Z},Z]}\nu+\cN_Z(\cD_{\bar{Z}}\nu)
    \in\cN_ZV^{\perp}+\tau_VV^{\perp}.
  \end{multline*}
\end{proof}

With all this in hand, we have:
\begin{thm}
  \label{th:7}
  Let $V\leq\triv$ be a bundle of $(3,1)$-planes with
  tension field $\tau_V$.  Suppose that:
  \begin{compactenum}
  \item $V$ is strongly conformal.\label{item:1}
  \item Equations \eqref{eq:7} hold.\label{item:2}
  \item $\set{p\in\Sigma\st
      \cN_ZV^{\perp}=\cN_{\bar{Z}}V^{\perp}\neq\set0, Z\in
      T_p\Sigma}$ is empty.\label{item:3}
  \end{compactenum}
  Set $U:=\cN_ZV^{\perp}+\tau_VV^{\perp}$ and restrict
  attention to the open dense subset of $\Sigma$ where $U$
  has fibres of locally constant dimension and so is a
  vector bundle.

  Then $\rank U\leq 2$ and we have:
  \begin{compactenum}[(a)]
  \item Where $\rank U=2$, there is a unique real, null line
    subbundle $f\leq V$ which, where it immerses, is a
    conformal immersion with conformal Gauss map $V$.
  \item Where $\rank U=1$, there are exactly two real, null
    line subbundles $f,\hat{f}\leq V$, which, where they
    immerse, are conformal immersions with conformal Gauss
    map $V$.  In this case, $V$ is harmonic and $f,\hat{f}$
    are a dual pair of Willmore, thus $S$-Willmore
    \cite{Eji88}, surfaces.
  \item Where $\rank U=0$, $V$ is constant and there are
    infinitely many real, null line subbundles $f\leq V$
    defining conformal immersions with conformal Gauss map
    $V$.
  \end{compactenum}
\end{thm}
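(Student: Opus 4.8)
The plan is to read off from the hypotheses which of $\cN^{1,0}V^\perp$ and $\tau_VV^\perp$ is non-zero and then to reconstruct, or complete, the bundle $f^{0,1}$ inside $V$ and invoke \cref{th:5}; throughout I use that for a conformal immersion $f$ with conformal Gauss map $V$ the bundle $f^{0,1}$ is a $\cD^{0,1}$-stable maximal isotropic subbundle of $V$ with $\cN^{1,0}f^{0,1}=0$ (\cref{sec:conformal-gauss-map}), so that \cref{th:5} is a genuine converse. For the preliminaries: since $\cN$, hence also $\tau_V=*\d^\cD*\cN$, takes values in skew-endomorphisms permuting $V$ and $V^\perp$, the argument of \eqref{eq:4} gives $AV^\perp=(\ker A\restr{V})^\perp\cap V$ for any such $A$, so $AV^\perp$ is isotropic in $V$ whenever $A^2\restr{V^\perp}=0$. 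By strong conformality and \eqref{eq:9} this applies to $A=\cN_Z$ ($Z\in T^{1,0}\Sigma$) and to $A=\tau_V$, whence $\cN^{1,0}V^\perp$ and $\tau_VV^\perp$ are isotropic, of rank $\le1$, so $\rank U\le2$. Also $\cN^{1,0}V^\perp=0$ forces $\cN^{1,0}=0$, hence $\cN=0$ and $V$ constant (and then $\tau_V=0$); and \eqref{eq:8} gives $\tau_VV^\perp\sub\ker\cN^{1,0}\restr{V}=(\cN^{1,0}V^\perp)^\perp$. Finally, if $\cN^{1,0}V^\perp\ne0$, write a local generator as $a+ib$ with $a,b$ real sections of $V$: isotropy forces $(a,a)=(b,b)$ and $(a,b)=0$, so were $(a+ib,a-ib)=(a,a)+(b,b)$ also zero, $a,b$ would span a totally isotropic subspace of $V$ and hence be dependent, making $\cN^{1,0}V^\perp=\cN^{0,1}V^\perp$, which hypothesis~\ref{item:3} forbids. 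Thus $\cN^{1,0}V^\perp\oplus\cN^{0,1}V^\perp$ is non-degenerate and $\cN^{1,0}V^\perp$ is not conjugation-invariant; as $\tau_VV^\perp$ is conjugation-invariant (it is $\tau_V$ applied to the real bundle $V^\perp$), it meets $\cN^{1,0}V^\perp$, indeed $\cN^{1,0}V^\perp\oplus\cN^{0,1}V^\perp$, trivially. Consequently $\rank U=0\iff V$ constant; $\rank U=1\iff\cN^{1,0}V^\perp$ has rank $1$ and $\tau_VV^\perp=0$, i.e.\ (by skew-symmetry of $\tau_V$) $\tau_V=0$ with $V$ non-constant; and $\rank U=2\iff$ both have rank $1$, in which case $U=\cN^{1,0}V^\perp\oplus\tau_VV^\perp$.

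\emph{Case $(a)$.} I would show that $W:=U$ satisfies the hypotheses of \cref{th:5}: it is maximal isotropic in $V$ (the two summands are orthogonal by \eqref{eq:8}, and $\rank U=2$); $\cN^{1,0}U=0$ by strong conformality and \eqref{eq:8}; and it is $\cD^{0,1}$-stable by \cref{th:2}, applied with the never-vanishing section $w=\cN_Z\nu_0$ of $\cN^{1,0}V^\perp\sub U$, since $\cD_{\bar Z}w\in U$ by \cref{th:6}. Then $f:=U\cap\overline U$ is, where it immerses, a conformal immersion with conformal Gauss map $V$. For uniqueness, if $f'$ is another one then $W':=(f')^{0,1}$ is a $\cD^{0,1}$-stable maximal isotropic with $\cN^{1,0}W'=0$, and the identity $\tau_V=\cD_{\bar Z}\cN_Z-\cN^{1,0}_{[\bar Z,Z]}$ from the proof of \cref{th:6} shows, term by term, that $\tau_VV^\perp\sub W'$; with $\cN^{1,0}V^\perp\sub W'$ this forces $U\sub W'$, hence $W'=U$ and $f'=f$. (Incidentally $f=\tau_VV^\perp$.)

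\emph{Case $(b)$.} Here $\tau_V=0$ and $W_1:=\cN^{1,0}V^\perp$ is a null line bundle; \cref{th:6} gives $\cD_{\bar Z}(\cN_Z\nu)\in W_1$, so $W_1$ --- and hence its orthogonal complement $K$ in $V$, a rank-$3$ bundle with radical $W_1$ and equal to $\ker\cN^{1,0}\restr{V}$ --- is $\cD^{0,1}$-stable. Since $W_1$ is the radical of $K$, the maximal isotropic subbundles of $V$ containing $W_1$ are exactly the two preimages in $K$ of the isotropic line subbundles of the rank-$2$ non-degenerate bundle $K/W_1$; each such $W$ has $\cN^{1,0}W=0$ (as $W\le K$) and is $\cD^{0,1}$-stable by \cref{th:2} with the section $w=\cN_Z\nu_0\in\Gamma W_1\sub\Gamma W$. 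So \cref{th:5} yields two conformal immersions $f,\hat f$ with conformal Gauss map $V$, distinct since $W\cap\hat W=W_1$ is not conjugation-invariant; and any such immersion $f'$ has $(f')^{0,1}$ maximal isotropic and, being annihilated by $\cN^{1,0}$, containing $W_1$, hence equal to one of the two. Finally $V$ is harmonic, so $f,\hat f$ are Willmore by the equivalence recalled in the introduction, and a pair of surfaces with common harmonic conformal Gauss map is precisely a dual $S$-Willmore pair \cite{Eji88}.

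\emph{Case $(c)$, and the main difficulty.} Now $V$ is a constant $(3,1)$-plane and $\cD=\d$ is the trivial flat connection, so by \cref{th:5} the conformal immersions with conformal Gauss map $V$ are exactly those arising from holomorphic maximal isotropic subbundles of $V\tens\C$ --- equivalently, from the conformal immersions of $\Sigma$ into the round $2$-sphere $\P(\cL\cap V)$ whose image lies in no circle --- of which there are infinitely many. The main difficulty is the algebraic bookkeeping of the preliminaries: extracting from strong conformality, \eqref{eq:7} and hypothesis~\ref{item:3} that $\cN^{1,0}V^\perp$ and $\tau_VV^\perp$ are transverse null lines and that $\rank U$ records precisely the trichotomy constant $/$ harmonic $/$ generic. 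Once that is in place, \cref{th:2}, \cref{th:5} and \cref{th:6} supply the analytic content essentially for free.
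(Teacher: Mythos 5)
Your overall strategy---show $U$ is isotropic, then feed maximal isotropic subbundles containing $\cN_ZV^{\perp}$ into \cref{th:5} via \cref{th:2} and \cref{th:6}---is exactly the paper's, and your case-by-case arguments for (a), (b), (c) are sound; your explicit uniqueness argument in (a) and the description of the two $W$'s in (b) via $K/W_1$ are slightly more detailed than the paper's. However, the preliminary ``trichotomy'', which you yourself single out as the main difficulty, contains a genuine error: $\cN^{1,0}\restr{V^{\perp}}$ need \emph{not} have rank $\le 1$. Its image is an isotropic subspace of the \emph{complexification} of $V$, where maximal isotropics have dimension $2$; only the real endomorphism $\tau_V$ is forced to have image of rank $\le 1$. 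Indeed $\rank\cN^{1,0}\restr{V^{\perp}}=2$ is the typical situation for a conformal Gauss map (then $\cN_ZV^{\perp}=f^{0,1}$ by \eqref{eq:5}), and when moreover $\tau_V=0$ it is precisely the Willmore, non-$S$-Willmore case of \cref{th:8}(a). Consequently your equivalences ``$\rank U=1\iff\tau_V=0$ and $V$ non-constant'' and ``$\rank U=2\iff$ both summands have rank $1$'', the parenthetical ``$f=\tau_VV^{\perp}$'', and the closing description of case (a) as ``two transverse null lines'' are all false: when $\rank\cN^{1,0}\restr{V^{\perp}}=2$, equation \eqref{eq:8} forces $\tau_VV^{\perp}\le(\cN_ZV^{\perp})^{\perp}\cap V=\cN_ZV^{\perp}$, so $U=\cN_ZV^{\perp}$ has rank $2$ whatever $\tau_V$ is, and $f=U\cap\overline{U}$ has nothing to do with $\tau_VV^{\perp}$. (Compare the paper's remark following the theorem, which treats exactly this case.)

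Fortunately none of this is load-bearing. Your proof of (a) uses only that $U$ is isotropic of rank $2$ with $\cN_ZV^{\perp}\neq 0$ (which does hold on the rank-$2$ locus, since $\cN_Z\restr{V^{\perp}}=0$ at a point would force $U$ there to be the real isotropic, hence at most one-dimensional, space $\tau_VV^{\perp}$); your proof of (b) uses only the correct implication $\rank U=1\Rightarrow\tau_V=0$, which follows because $U$ is then the line $\cN_ZV^{\perp}$, not conjugation-stable by hypothesis \ref{item:3}, while $\tau_VV^{\perp}\le U$ is real; and (c) is unaffected. So the theorem is proved, but you should excise the rank-$\le 1$ claim and everything derived from it.
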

\begin{proof}
  First note that hypotheses \ref{item:1} and \ref{item:2}
  amount to the assertion that $U\leq V$ is isotropic so
  that $\rank U\leq 2$.

  We now consider each possibility for $\rank U$ in turn.

  First suppose that $\rank U=2$.  Then $U$ is maximal
  isotropic in $V$ and is $\cD^{0,1}$-stable by \cref{th:2}
  in view of \cref{th:6}.  By construction
  $\cN_{Z}V^{\perp}\sub U$ so that we may take $U=W$ in
  \cref{th:5} to learn that $V$ is the conformal Gauss map
  of $f=U\cap \overline{U}$ where the latter immerses.

  Now suppose that $\rank U=1$.  We claim that
  $U=\cN_ZV^{\perp}$: first this holds on a dense open set
  $\Omega$, (if $\cN_ZV^{\perp}$ vanishes on an open set, so
  does $\tau_V$) so that, by hypothesis \ref{item:3}, we
  have $U\cap \overline{U}=\set0$ on $\Omega$.  Since
  $\tau_V$ is real, we must have $\tau_V=0$ on $\Omega$ and
  hence everywhere so that the claim follows and $V$ is a
  harmonic map. It is now immediate that $U$ is
  $\cD^{0,1}$-stable.  By hypothesis \ref{item:3}, we have
  that $U\cap \overline{U}=\set0$ everywhere so that there
  are exactly two real, null line subbundles $f_1,f_2\leq V$
  orthogonal to $U\oplus \overline{U}$ and we set
  $W_i=f_i\oplus U$, $i=1,2$.  \cref{th:2}, applied to a
  section $w$ of $U$ assures us that each $W_i$ is
  $\cD^{0,1}$-stable so that \cref{th:5} gives that each
  $f_i$ is conformal where it immerses with conformal Gauss
  map $V$.  In this case, the $f_i$ are dual Willmore surfaces.

  Finally, if $\cN^{1,0}=0$ then $\cN$ vanishes also so that
  $V$ is $\d$-stable and so constant.  Thus $S^2:=\P(\cL\cap V)$
  is a conformal $2$-sphere and any conformal immersion
  $f:\Sigma\to S^2$ (in particular, any meromorphic function
  on $\Sigma$, off its branch locus) has $V$ as conformal
  Gauss map.
\end{proof}

\begin{rems}
\item[]
  \begin{compactenum}
  \item The caveat that $f$ immerse is not vacuous: one can
    readily construct $V$ satisfying the hypotheses of
    \cref{th:7} for which $f$ we find is constant.  Indeed,
    given constant $f\in\P(\cL)$, let $W\leq\ctriv$ be a
    non-constant rank $2$ isotropic subbundle containing $f$
    with $\overline{W}$ holomorphic with respect to the
    trivial holomorphic structure of $\ctriv$ and choose
    $V^{\perp}$ to be a complement to $W+\overline{W}$ in
    $f^{\perp}$.  Then it is not difficult to show that $W$
    is $\cD^{0,1}$-stable and $\cN^{1,0}W=\set0$.
  \item For strongly conformal $V$, equations \eqref{eq:7}
    are not independent.  Indeed, when $\rank\cN^{1,0}\restr{V^{\perp}}=2$,
    $\cN_ZV^{\perp}$ is maximal isotropic in $V$ so that
    \eqref{eq:8} forces
    $\tau_VV^{\perp}\leq\cN_{Z}V^{\perp}$.  Thus
    $\tau_VV^{\perp}$ is isotropic and \eqref{eq:9} holds.
    Again, when $\rank\cN^{1,0}\restr{V^{\perp}}=1$, it is
    easy to see that \eqref{eq:8} holds automatically.
  \end{compactenum}
\end{rems}

In the interesting case of harmonic $V$ (so that
$\tau_V=0$), matters simplify considerably.  Here, of
course, hypothesis \ref{item:2} of \cref{th:7} is vacuous.
Moreover, $\cN^{1,0}$ is a holomorphic $1$-form with respect
to the Koszul--Malgrange holomorphic structure of $\ctriv$
with $\bar{\partial}$-operator $\cD^{0,1}$.  It follows that
$\cN_{Z}\restr{V^{\perp}}$ has constant rank off a divisor
and, moreover, that there is a $\cD^{0,1}$-holomorphic
subbundle of $\ctriv$ that coincides with $\cN_ZV^{\perp}$
away from that divisor.  In this setting, we conclude with
Dorfmester--Wang:

\begin{cor}[c.f.\
  \citelist{\cite{DorWan13}*{Theorem~3.11}\cite{DorWan}*{Theorem~3.11}}]
  \label{th:8}
  Let $V\leq\triv$ be a strongly conformal harmonic bundle
  of $(3,1)$-planes.

  Let $U\leq V$ be the $\cD^{0,1}$-holomorphic, isotropic
  bundle that coincides with $\cN_ZV^{\perp}$ off a divisor. 
  \begin{compactenum}[(a)]
  \item if $\rank U=2$, there is a unique real, null line
    subbundle $f\leq V$ which, where it immerses, is a
    Willmore, non $S$-Willmore, surface with conformal Gauss
    map $V$.
  \item if $\rank U=1$ and $U\cap \overline{U}=\set0$, there
    are exactly two real, null line subbundles
    $f,\hat{f}\leq V$, which, where they immerse, are a dual
    pair of $S$-Willmore surfaces.
  \end{compactenum}
\end{cor}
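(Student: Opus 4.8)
The plan is to read off \cref{th:8} from \cref{th:7}, the only genuine work being the passage across the divisor. First I would note that harmonicity of $V$ means $\tau_V=0$, so hypothesis~\ref{item:2} of \cref{th:7} is vacuous and the bundle $U=\cN_ZV^\perp+\tau_VV^\perp$ of that theorem is simply $\cN_ZV^\perp$; by the discussion preceding \cref{th:8} this has locally constant rank on the complement of a divisor $D$, where it coincides with the holomorphic isotropic subbundle $U\le V$ of the statement, so that $\rank U$ is this common rank. Strong conformality is hypothesis~\ref{item:1}, so to apply \cref{th:7} on $\Sigma\setminus D$ it remains only to check hypothesis~\ref{item:3} there.

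For that I would argue by cases on $\rank U$, using that $\cN_ZV^\perp$ is isotropic (strong conformality) and that $\cN$ is real, so $\cN_{\bar Z}V^\perp=\overline{\cN_ZV^\perp}$. If $\rank U=2$, then off $D$ the bundle $\cN_ZV^\perp$ is maximal isotropic in $V$; were it equal to its conjugate at a point it would be the complexification of a real, totally isotropic $2$-plane, impossible in a space of Lorentzian signature $(3,1)$. If $\rank U=1$, then off $D$ we have $\cN_ZV^\perp=U$, and $U\cap\overline U=\set0$ gives $\cN_ZV^\perp\cap\overline{\cN_ZV^\perp}=\set0$. Either way the set of hypothesis~\ref{item:3} is empty on $\Sigma\setminus D$, and \cref{th:7} applies there.

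Applying it, I would then extend the resulting bundles across $D$. In case (a), \cref{th:7} furnishes on $\Sigma\setminus D$ a unique real, null line subbundle $f\le V$ which, where it immerses, is conformal with conformal Gauss map $V$; its proof moreover identifies $U=f^{0,1}$ and $f=U\cap\overline U$. Since $U$ is a genuine subbundle of $V$ over all of $\Sigma$ and $U\ne\overline U$ everywhere (no real totally isotropic $2$-plane), $f=U\cap\overline U$ is a line subbundle of $V$ over $\Sigma$. Harmonicity of $V$ makes $f$ Willmore, and $f$ is not $S$-Willmore: by Ejiri's duality \cite{Eji88} an $S$-Willmore surface carries a dual Willmore surface with the same central sphere congruence $V$, i.e.\ a second real null line subbundle of $V$ with conformal Gauss map $V$, contradicting the uniqueness just obtained. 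In case (b), part~(b) of \cref{th:7} supplies on $\Sigma\setminus D$ exactly two real, null line subbundles $f,\hat f\le V$, a dual pair of Willmore, hence $S$-Willmore \cite{Eji88}, surfaces; they are the two null lines of the rank-two Lorentzian bundle $(U\oplus\overline U)^\perp\cap V$, the orthogonal complement in $V$ of the positive-definite bundle $U\oplus\overline U$, which is defined over all of $\Sigma$, so $f$ and $\hat f$ likewise extend across $D$.

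The one step that is not pure bookkeeping is this extension across $D$: \cref{th:7} produces $f$ only on the locus of locally constant rank, whereas \cref{th:8} asks for subbundles of $V$ over $\Sigma$. The resolution is that the bundle $U$ of the statement is already defined over $\Sigma$, and $f$ (respectively $f,\hat f$) arises from it by a continuous linear-algebra recipe --- $U\cap\overline U$, respectively the null lines of $(U\oplus\overline U)^\perp\cap V$ --- valid wherever $U$ is; one only has to match these recipes with those occurring inside the proof of \cref{th:7}, which that proof makes explicit. The subsidiary point distinguishing Willmore from $S$-Willmore is settled by Ejiri's duality.
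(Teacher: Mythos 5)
Your proposal is correct and follows exactly the route the paper intends: the paper gives no separate proof of the corollary, deriving it from \cref{th:7} together with the observation that $\cN^{1,0}$ is Koszul--Malgrange holomorphic, and your verification of hypothesis~\ref{item:3} (no real totally isotropic $2$-plane in signature $(3,1)$, respectively the assumption $U\cap\overline U=\set0$), your extension of $f=U\cap\overline U$ and of the two null lines of $(U\oplus\overline U)^\perp\cap V$ across the divisor, and your use of Ejiri's duality to separate the non-$S$-Willmore case are precisely the details the paper leaves implicit.
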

\begin{rem}
  In the notation of Dorfmeister--Wang
  \cites{DorWan13,DorWan}, after a gauge transformation that
  renders $V,V^{\perp}$ constant, $\cN^{1,0}$ is represented
  by the matrix $B_1$.
\end{rem}

% \bib, bibdiv, biblist are defined by the amsrefs package.
\begin{bibdiv}
\begin{biblist}

\bib{Bla29}{book}{
      author={Blaschke, W.},
       title={Vorlesungen \"uber {D}ifferentialgeometrie {I}{I}{I}},
      series={Grundlehren Math.},
   publisher={Springer},
     address={Berlin},
        date={1929},
        volume={29},
        language={German},
}

\bib{Bry84}{article}{
      author={Bryant, Robert~L.},
       title={A duality theorem for {W}illmore surfaces},
        date={1984},
        ISSN={0022-040X},
     journal={J. Differential Geom.},
      volume={20},
      number={1},
       pages={23\ndash 53},
         url={http://projecteuclid.org/getRecord?id=euclid.jdg/1214438991},
      review={\MR{MR772125 (86j:58029)}},
}

\bib{Dar72a}{book}{
   author={Darboux, Gaston},
   title={Le\c{c}ons sur la th\'{e}orie g\'{e}n\'{e}rale des surfaces et les applications
   g\'{e}om\'{e}triques du calcul infinit\'{e}simal. Premi\`ere partie},
   publisher={Chelsea Publishing Co., Bronx, N. Y.},
   date={1972},
   pages={xxiv+601},
   review={\MR{0396211}},
   language={French},
}

\bib{DorWan13}{article}{
      author={Dorfmeister, Josef~F.},
      author={Wang, Peng},
       title={{W}illmore surfaces in spheres via loop groups {$I$}: generic
  cases and some examples},
date={2013},
eprint={arXiv:1301.2756},
url={http://arxiv.org/abs/1301.2756}
}

\bib{DorWan}{article}{
      author={Dorfmeister, Josef~F.},
      author={Wang, Peng},
       title={{Weierstrass--Kenmotsu representation of Willmore surfaces in
           spheres}},
       date={2019},
eprint={arXiv:1901.08395},
url={http://arxiv.org/abs/1901.08395}
}

\bib{Eji88}{article}{
  author={Ejiri, Norio},
   title={Willmore surfaces with a duality in $S^N(1)$},
   journal={Proc. London Math. Soc. (3)},
   volume={57},
   date={1988},
   number={2},
   pages={383--416},
   issn={0024-6115},
   review={\MR{950596}},
   doi={10.1112/plms/s3-57.2.383},
}

\bib{Fia44}{article}{
   author={Fialkow, Aaron},
   title={Conformal differential geometry of a subspace},
   journal={Trans. Amer. Math. Soc.},
   volume={56},
   date={1944},
   pages={309--433},
   issn={0002-9947},
   review={\MR{0011023}},
   doi={10.2307/1990251},
 }
 
\bib{Her03}{book}{
   author={Hertrich-Jeromin, Udo},
   title={Introduction to M\"{o}bius differential geometry},
   series={London Mathematical Society Lecture Note Series},
   volume={300},
   publisher={Cambridge University Press, Cambridge},
   date={2003},
   pages={xii+413},
   isbn={0-521-53569-7},
   review={\MR{2004958}},
   doi={10.1017/CBO9780511546693},
 }

\bib{Rig87}{article}{
      author={Rigoli, Marco},
       title={The conformal {G}auss map of submanifolds of the {M}\"obius
  space},
        date={1987},
        ISSN={0232-704X},
     journal={Ann. Global Anal. Geom.},
      volume={5},
      number={2},
      pages={97\ndash 116},
      doi={10.1007/BF00127853},
         url={http://dx.doi.org/10.1007/BF00127853},
      review={\MR{MR944775 (89e:53083)}},
}

\bib{Tho24}{article}{
   author={Thomsen, G.},
   title={Grundlagen der konformen Fl\"{a}chentheorie},
   language={German},
   journal={Abh. Math. Sem. Univ. Hamburg},
   volume={3},
   date={1924},
   number={1},
   pages={31--56},
   issn={0025-5858},
   review={\MR{3069418}},
   doi={10.1007/BF02954615},
   }

\bib{Wan98}{article}{
   author={Wang, Changping},
   title={Moebius geometry of submanifolds in $S^n$},
   journal={Manuscripta Math.},
   volume={96},
   date={1998},
   number={4},
   pages={517--534},
   issn={0025-2611},
   review={\MR{1639852}},
   doi={10.1007/s002290050080},
  }

\end{biblist}
\end{bibdiv}

% % \bib, bibdiv, biblist are defined by the amsrefs package.
% \begin{bibdiv}
% \begin{biblist}

% \bib{DorWan13}{article}{
%       author={Dorfmeister, Josef~F},
%       author={Wang, Peng},
%        title={{W}illmore surfaces in spheres via loop groups {$I$}: generic
%   cases and some examples},
% date={2013},
% eprint={arXiv:1301.2756}
% url={http://arxiv.org/abs/1301.2756}
% }

% \end{biblist}
% \end{bibdiv}

\end{document}